\newcommand{\B}{{\cal B}}
\newcommand{\D}{{\cal D}}
\newcommand{\E}{{\cal E}}
\newcommand{\C}{{\cal C}}
\newcommand{\F}{{\cal F}}
\newcommand{\A}{{\cal A}}
\newcommand{\Hh}{{\cal H}}
\newcommand{\Pp}{{\cal P}}
\newcommand{\G}{{\cal G}}
\newcommand{\Z}{{\bf Z}}
\newtheorem{theorem}{Theorem}[section]
\newtheorem{definition}{Definition}[section]
\newtheorem{lemma}[theorem]{Lemma}
\newtheorem{corollary}[theorem]{Corollary}
\def\whitebox{{\hbox{\hskip 1pt
 \vrule height 6pt depth 1.5pt
 \lower 1.5pt\vbox to 7.5pt{\hrule width
    3.2pt\vfill\hrule width 3.2pt}%
 \vrule height 6pt depth 1.5pt
 \hskip 1pt } }}
\def\qed{\ifhmode\allowbreak\else\nobreak\fi\hfill\quad\nobreak
     \whitebox\medbreak}
\newcommand{\proof}{\noindent{\it Proof.}\ }
\newcommand{\qedd}{\hfill$\mbox$}
\newcommand{\ignore}[1]{}
\newcommand{\tabincell}[2]{\begin{tabular}{@{}#1@{}}#2 \end{tabular}}
\begin{document}

\medskip
\title{Partial Difference Sets with Denniston Parameters in Elementary Abelian $p$-Groups}

 \author{{\small   Jingjun Bao$^{1}$, \ Qing Xiang$^2$, \ Meng Zhao$^2$} \\
 {\small $^1$ Department of Mathematics, Ningbo University, Ningbo 315211, China}\\
 {\small $^2$ Department of Mathematics and Shenzhen International Center for Mathematics,} \\ {\small Southern University of Science and Technology, Shenzhen 518055, China}\\
 {\small E-mail:  \ baojingjun@hotmail.com; xiangq@sustech.edu.cn; zhaom@mail.sustech.edu.cn}\\
 }

%
\date{}
\maketitle

\begin{abstract}
\medskip
Denniston \cite{D1969} constructed partial difference sets (PDS) with parameters $(2^{3m}, (2^{m+r}-2^m+2^r)(2^m-1), 2^m-2^r+(2^{m+r}-2^m+2^r)(2^r-2), (2^{m+r}-2^m+2^r)(2^r-1))$ in elementary abelian groups of order $2^{3m}$ for all $m\geq 2$ and $1 \leq r < m$. These PDS correspond to maximal arcs in the Desarguesian projective planes PG$(2, 2^m)$. Davis et al. \cite{DHJP2024} and also De Winter \cite{dewinter23} presented constructions of  PDS  with Denniston parameters $(p^{3m}, (p^{m+r}-p^m+p^r)(p^m-1), p^m-p^r+(p^{m+r}-p^m+p^r)(p^r-2), (p^{m+r}-p^m+p^r)(p^r-1))$ in elementary abelian groups of order $p^{3m}$ for all $m \geq 2$ and  $r \in \{1, m-1\}$, where $p$ is an odd prime. The constructions in \cite{DHJP2024, dewinter23}  are particularly intriguing, 
as it was shown by Ball, Blokhuis, and Mazzocca \cite{BBM1997}  that no nontrivial maximal arcs in PG$(2, q^m)$ exist for any odd prime power $q$. In this paper, 
we show that PDS with Denniston parameters $(q^{3m}, (q^{m+r}-q^m+q^r)(q^m-1), q^m-q^r+(q^{m+r}-q^m+q^r)(q^r-2), (q^{m+r}-q^m+q^r)(q^r-1))$ exist in elementary abelian groups of order $q^{3m}$ for all $m \geq 2$ and $1 \leq r < m$,  where $q$ is an arbitrary  prime power. 

\vspace{0.1in}

\noindent {\bf Key words}: Cyclotomy, Denniston arc, Partial difference set, Quadratic form, Strongly regular graph.

\smallskip
\end{abstract}
{
\section{Introduction}

Let $G$ be a finite multiplicative group of order $v$ with identity $e$. A $k$-subset $D$ of $G$ is called a $(v, k, \lambda, \mu)$-{\em partial difference set} (PDS) if  the list of ``differences" $d_1d_2^{-1}, d_1,d_2\in D, d_1\neq d_2 $, represents each nonidentity element of $D$ exactly $\lambda$ times and each nonidentity element of $G\setminus D$ exactly $\mu$ times. PDS are often studied within the context of the group ring $\mathbb{Z}[G]$. For a subset $D$ of a finite group $G$, we still use $D$ to denote the group ring element $\sum\limits_{d \in D} d\in \mathbb{Z}[G]$, and define $D^{(-1)}=\sum\limits_{d \in D} d^{-1}$.
Then $D$ is a $(v, k, \lambda, \mu)$-PDS if and only if it satisfies the following equation in $\mathbb{Z}[G]$:
$$DD^{(-1)}=\mu G+(\lambda-\mu)D+\gamma e$$
where $\gamma=k-\mu$ if $e\notin D$ and $\gamma=k-\lambda$ if $e\in D$. A $(v, k, \lambda, \mu)$-PDS with $\lambda=\mu$ is simply a $(v,k,\lambda)$-difference set. If $D$ is a $(v, k, \lambda, \mu)$-PDS with $\lambda\neq \mu$, then $D^{(-1)}=D$ and 
$$D^2=\mu G+(\lambda-\mu)D+\gamma e.$$

A well-known example of PDS is the Paley PDS. Let $\mathbb{F}_q$ be the  finite field with $q$ elements, where $q$ is a prime power congruent to 1 modulo  $4$. Then the set of nonzero squares in $\mathbb{F}_q$ forms a $(q, \frac{q-1}{2}, \frac{q-5}{4}, \frac{q-1}{4})$-PDS in the additive group of $\mathbb{F}_q$ and this PDS is usually called the  {\em Paley} PDS. 

A $(v, k, \lambda, \mu)$-PDS is said to be of Latin square type (resp. negative Latin square type) if 
$$(v,k,\lambda,\mu)=(n^2, r(n-\epsilon), \epsilon n+r^2-3\epsilon r, r^2-\epsilon r)$$
and $\epsilon =1$ (resp. $\epsilon =-1$).  Many constructions for PDS of Latin square type are known, see,  for example, \cite{CK1986, CRX1996, LM1990, M1994}.  

If $e\notin D$ and $D^{(-1)}=D$, then the PDS $D$ is said to be  {\em regular}. Regular PDS are equivalent to strongly regular Cayley graphs \cite{M1994}. PDS in elementary abelian groups are closely related to projective two-weight codes and two-intersection sets in projective spaces over finite fields \cite{BM2022, CK1986}. We now explain the connection between PDS and projective two-intersection sets.

Let $p$ be a prime and let $q$ be a power of prime $p$. 
The Desarguesian $(m-1)$-dimensional projective space over $\mathbb{F}_q$ is denoted by PG$(m-1,q)$. The vector space underlying this projective space is denoted by $V_m(q)$, which is an $m$-dimensional vector space over $\mathbb{F}_q$. A {\em projective} $(n, m, h_1, h_2)$ {\em set} ${\cal O}$ is a proper, nonempty subset of $n$ points of PG$(m-1,q)$ with the property that every hyperplane meets ${\cal O}$ in $h_1$ points or $h_2$ points. Many examples of projective $(n,m,h_1,h_2)$ sets are known. For example, a hyperoval of PG$(2,2^m)$ is a projective $(2^m+2, 3, 0, 2)$ set in PG$(2,2^m)$, and a unital of PG$(2,q^2)$ is a projective $(q^3+1, 3, 1, q+1)$ set in PG$(2,q^2)$.

Let ${\cal O}=\{\langle y_1\rangle, \langle y_2\rangle, \ldots, \langle y_n\rangle \}$ be a set consisting of  $n$ points of PG$(m-1,q)$.  Define $\Omega=\{v\in V_m(q)~|~\langle v\rangle \in {\cal O} \}$, which is the set of nonzero vectors in $V_m(q)$ corresponding to ${\cal O}$. That is, $\Omega=\mathbb{F}_q^\ast {\cal O}$, where $\mathbb{F}_q^{\ast}=\mathbb{F}_q\setminus\{0\}$; thus $\Omega$ is $\mathbb{F}_q^*$-invariant (i.e., $\alpha \Omega=\Omega$, $\forall \alpha\in \mathbb{F}_q^*$). We have the following lemma. 

\begin{lemma}
Let ${\cal O}$ and $\Omega$ be defined as above. Then ${\cal O}$ is a projective $(n,m,h_1,h_2)$ set in {\rm PG}$(m-1,q)$ if and only if $\Omega$ is a $(q^m, (q-1)n, \lambda, \mu)$-PDS in the elementary abelian group $(V_m(q),+)$, where $\lambda=(q-1)n+(qh_1-n)(qh_2-n)+q(h_1+h_2)-2n$ and $\mu=(q-1)n+(qh_1-n)(qh_2-n)$.
\end{lemma}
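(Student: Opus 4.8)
The plan is to translate the two-intersection condition on $\mathcal{O}$ into a group-ring identity for $\Omega$ in $\mathbb{Z}[(V_m(q),+)]$ and then read off the PDS parameters. First I would set up the dual/character-theoretic bookkeeping: identify hyperplanes of $\mathrm{PG}(m-1,q)$ with nonzero functionals on $V_m(q)$ up to scalars, equivalently with the nontrivial characters of $(V_m(q),+)$ up to the action of $\mathbb{F}_q^\ast$. For a nontrivial additive character $\psi$, the kernel $\ker\psi$ is a hyperplane $H$ of $V_m(q)$, and $\langle v\rangle$ lies in the projective hyperplane determined by $H$ exactly when $\psi(v)=1$. Counting gives $\sum_{v\in\Omega}\psi(v) = (q-1)|\mathcal{O}\cap H| - (n-|\mathcal{O}\cap H|) = q|\mathcal{O}\cap H| - n$, using that $\Omega$ is $\mathbb{F}_q^\ast$-invariant so each point of $\mathcal{O}$ not on $H$ contributes $\sum_{\alpha\in\mathbb{F}_q^\ast}\psi(\alpha y)=-1$ and each point on $H$ contributes $q-1$.

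Next I would observe that $\mathcal{O}$ is a projective $(n,m,h_1,h_2)$ set if and only if for every nontrivial character $\psi$ the value $\widehat{\Omega}(\psi):=\sum_{v\in\Omega}\psi(v)$ takes exactly one of the two values $qh_1-n$ or $qh_2-n$; this is precisely the statement that the Cayley graph on $\Omega$ is a strongly regular graph, and it is well known (see \cite{M1994, CK1986}) that a subset $\Omega$ of an abelian group with $\Omega=-\Omega$, $0\notin\Omega$, whose nontrivial character values take exactly two values is equivalent to $\Omega$ being a regular PDS. To make the excerpt self-contained I would include the short computation: set $\theta_1=qh_1-n$, $\theta_2=qh_2-n$, $k=|\Omega|=(q-1)n$ (which is $\widehat{\Omega}(\psi_0)$ at the trivial character). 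The standard identity $\Omega\Omega^{(-1)} = \mu G + (\lambda-\mu)\Omega + (k-\mu)e$ holds if and only if $\widehat{\Omega}(\psi)^2 - (\lambda-\mu)\widehat{\Omega}(\psi) - (k-\mu) = 0$ for all nontrivial $\psi$ and $k^2 = \mu q^m + (\lambda-\mu)k + (k-\mu)$; the first says the two character values are exactly the two roots of $x^2-(\lambda-\mu)x-(k-\mu)$, whence $\lambda-\mu=\theta_1+\theta_2$ and $-(k-\mu)=\theta_1\theta_2$, i.e.
\[
\mu = k + \theta_1\theta_2 = (q-1)n + (qh_1-n)(qh_2-n),
\]
\[
\lambda = \mu + \theta_1 + \theta_2 = (q-1)n + (qh_1-n)(qh_2-n) + q(h_1+h_2) - 2n,
\]
matching the claimed formulas. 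The parameter $v=q^m$ is immediate, and the remaining Fourier identity $k^2=\mu v+(\lambda-\mu)k+(k-\mu)$ is then a routine check using $v=q^m$ and the standard counting relation $h_1+h_2$ versus $n$ coming from fixing the two intersection numbers; alternatively it follows automatically once one knows all nontrivial character values are roots of the quadratic, by Parseval.

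The main obstacle, such as it is, is purely bookkeeping: being careful with the $\mathbb{F}_q^\ast$-orbit counting so that the character sum $\widehat{\Omega}(\psi)$ comes out as $q|\mathcal{O}\cap H|-n$ rather than off by a factor or sign, and checking that the correspondence "hyperplanes $\leftrightarrow$ nontrivial characters mod $\mathbb{F}_q^\ast$" is a bijection on the relevant counts so that \emph{every} nontrivial character value, not just those from one scalar class, is accounted for. One also needs $\Omega=-\Omega$ (true since $-1\in\mathbb{F}_q^\ast$) and $0\notin\Omega$ to invoke the PDS characterization; both are built into the definition of $\Omega$. Once these are in place, the equivalence is a direct two-way implication via the Fourier inversion characterization of regular PDS, so I would structure the proof as: (1) compute $\widehat{\Omega}(\psi)$ in terms of $|\mathcal{O}\cap H|$; (2) invoke the character-theoretic criterion for regular PDS; (3) solve the resulting quadratic to obtain $\lambda,\mu$.
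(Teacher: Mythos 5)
Your proposal is correct and follows essentially the same route the paper itself uses (the character-sum computation $\psi_a(\Omega)=q|a^{\perp}\cap{\cal O}|-n$, the two-eigenvalue criterion of Theorem 2.1, and Vieta's formulas to recover $\lambda$ and $\mu$ from $\theta_1=qh_1-n$, $\theta_2=qh_2-n$). One small caveat: when $q=p^e$ with $e>1$ the kernel of a nontrivial character $\psi_a$ is only an $\mathbb{F}_p$-hyperplane, so the $\mathbb{F}_q$-hyperplane $H$ in your orbit count must be taken to be $a^{\perp}=\{y\mid {\rm Tr}_{q^m/q}(ay)=0\}$ (the set where the orbit sum equals $q-1$) rather than $\ker\psi$ itself --- your subsequent computation already uses the correct object, so this is a wording fix, not a gap.
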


Let $n\ge 2, d\ge 1$ be integers. An $(n,d)$-{\em arc} in PG$(2, q)$ is a set of $n$ points, of which no $d+1$ points are collinear. Let ${\cal K}$ be an $(n,d)$-arc in PG$(2, q)$, and let $x$ be a point of ${\cal K}$. Then each of the $q+1$ lines  passing through $x$ contains at most $d-1$ points of ${\cal K}$. Therefore, 
$$n\leq 1+(q+1)(d-1).$$
An $(n,d)$-arc is called {\em maximal}\footnote{Van Lint and Wilson \cite{vlWilson} proposed to use ``perfect" instead of ``maximal", but the use of perfect arc never caught on.}  if $n=1+(q+1)(d-1)$. Any line of PG$(2, q)$ that contains a point of a maximal arc ${\cal K}$ evidently contains exactly $d$ points of that arc, that is, 
$$\left|L\cap {\cal K}\right|=0\ {\rm or}\ d,$$
for any line $L$ of PG$(2, q)$. Hence, a maximal $(n,d)$-arc ${\cal K}$ in PG$(2, q)$ is a projective $(n, 3, 0, d)$ set in PG$(2, q)$. Furthermore, it can be shown that if ${\cal K}$ is a maximal $(n,d)$-arc in PG$(2, q)$, then $d$ divides $q$. For $q=2^m$ and for every $1\le r<m$,  Denniston \cite{D1969} constructed a maximal $(2^{m+r}-2^m+2^r, 2^r)$-arc in PG$(2, 2^m)$. Therefore, by Lemma~1.1, there exists an $\mathbb{F}_{2^m}^*$-invariant PDS with parameters $(2^{3m}, (2^{m+r}-2^m+2^r)(2^m-1), 2^m-2^r+(2^{m+r}-2^m+2^r)(2^r-2), (2^{m+r}-2^m+2^r)(2^r-1))$ in elementary abelian groups $\mathbb{Z}_{2}^{3m}$ for all $m\geq 2$ and $1 \leq r < m$.


For any odd prime power $q$, Ball et al. \cite{BBM1997} showed that no nontrivial maximal arcs exist in PG$(2, q)$. It follows that for any odd prime power $q$, there does not exist an $\mathbb{F}_q^*$-invariant PDS with Denniston parameters in the additive group of $\mathbb{F}_q^3$.

So it came as a surprise when Davis et al. \cite{DHJP2024} and also De Winter \cite{dewinter23} could construct PDS with Denniston parameters $(p^{3m}, (p^{m+r}-p^m+p^r)(p^m-1), p^m-p^r+(p^{m+r}-p^m+p^r)(p^r-2), (p^{m+r}-p^m+p^r)(p^r-1))$ in elementary abelian groups of order $p^{3m}$ for all $m \geq 2$ and $r \in \{1, m-1\}$, where $p$ is an odd prime. Indeed, the PDS constructed in \cite{DHJP2024, dewinter23} are {\bf not} $\mathbb{F}_{p^m}^*$-invariant; so the constructive result of Davis et al. in \cite{DHJP2024} and De Winter \cite{dewinter23} does not contradict the nonexistence result of Ball et al. \cite{BBM1997}.  In \cite{DHJP2024}, the authors asked whether it is possible to construct PDS in elementary abelian $p$-groups of order $p^{3m}$ ($p$ an odd prime) with Denniston parameters for $2\le r\le m-2$. In this paper, we answer this question in the affirmative by presenting a cyclotomic construction of PDS with Denniston parameters in the additive group of $\mathbb{F}_{q^{m}}\times \mathbb{F}_{q^{2m}}$ for all $m\geq 2$ and $1 \leq r < m$, where $q$ is an arbitrary prime power.


\section{Characters and quadratic forms}

We start by giving a quick introduction to characters of abelian groups. Let $G$ be a finite abelian group. A (complex) character 
$\chi$ of $G$ is a homomorphism from $G$ to $\mathbb{C}^*$. A character $\chi$ of $G$ is called {\em principal} if $\chi(g)=1$ for all $g\in G$; otherwise it is called {\em nonprincipal}. Let $\chi$ be a character of $G$ and $A=\sum\limits_{g\in G}a_gg \in \mathbb{C}[G].$ We define $\chi(A)=\sum\limits_{g\in G}a_g\chi(g).$ 

In the rest of this paper, let $p$ be a prime (even or odd) and let $q$ be a power of prime $p$. Consider the finite field $\mathbb{F}_{q^m}$. Let $\xi_p$ be a fixed complex primitive $p^{\rm th}$ root of unity and let Tr$_{q^m/p}$ be the trace function from $\mathbb{F}_{q^m}$ to $\mathbb{F}_p$. The {\em canonical additive character} $\psi_{\mathbb{F}_{q^m}}$ of $\mathbb{F}_{q^m}$ is defined as
\begin{equation}
\label{char}
\psi_{\mathbb{F}_{q^m}}(x)=\xi_p^{{\rm Tr}_{q^m/p}(x)},\; \forall x\in \mathbb{F}_{q^m}.
\end{equation}
All the characters of the additive group of $\mathbb{F}_{q^m}$ are then given by $\psi_a$,  where $a$ runs through all elements of $\mathbb{F}_{q^m}$. Here $\psi_a$ is the character defined by 
$$\psi_a(x)=\psi_{\mathbb{F}_{q^m}}(ax),\;\forall x\in \mathbb{F}_{q^m}.$$ 
Let ${\widehat{\mathbb{F}}_{q^m}}$ denotes the character group of the additive group of $\mathbb{F}_{q^m}$. In exactly the same way, all characters of the additive group of $\mathbb{F}_{q^{2m}}$ are given by $\varphi_b$, where $b$ runs through the elements of $\mathbb{F}_{q^{2m}}$. Here $\varphi_b$ is defined by $\varphi_b(x)=\xi_p^{{\rm Tr}_{q^{2m}/p}(bx)}$, $\forall x\in \mathbb{F}_{q^m}$. 





In the case when $G$ is an abelian group, the following theorem gives a spectral characterization of PDS (see \cite{M1994}). 
\begin{theorem}\label{PDS}
Let $G$ be a finite abelian group of order $v$ and let $D$ be a $k$-subset of $G$ such that $e\not\in D$ and $D^{(-1)}=D$. Suppose $k,\lambda, \mu$ are positive integers such that $k^2=\mu v+(\lambda -\mu)k+(k-\mu)$. Then $D$ is a 
$(v,k,\lambda,\mu)$-PDS in $G$ if and only if for every nonprincipal character $\psi$ of $G$, we have 
$$\psi(D)=\frac{(\lambda-\mu)\pm \sqrt{(\mu-\lambda)^2+4(k-\mu)}}{2}.$$
\end{theorem}

Let ${\cal O}=\{\langle y_1\rangle, \langle y_2\rangle, \ldots, \langle y_n\rangle \}$ be a set consisting  of $n$ points of PG$(m-1,q)$. Let $\Omega=\{v\in \mathbb{F}_{q^m}~|~\langle v\rangle \in {\cal O} \}$ be the set of nonzero vectors in $\mathbb{F}_{q^m}$ corresponding to ${\cal O}$. For any nonprincipal additive character $\psi_a$ of $\mathbb{F}_{q^{m}}$, we have 
\[
\begin{array}{l}
\vspace{0.2cm}\psi_a(\Omega)=(q-1)|a^{\perp}\cap \{y_1,y_2,\ldots,y_n\}|+(-1)(n-|a^{\perp}\cap \{y_1,y_2,\ldots,y_n\}|)\\
\vspace{0.2cm}\hspace{1cm}=q|a^{\perp}\cap \{y_1,y_2,\ldots,y_n\}|-n,
\end{array}
\]
where $a^{\perp}=\{y \in \mathbb{F}_{q^{m}}~|~ {\rm Tr}_{q^m/q}(ay)=0\}$, and Tr$_{q^m/q}$ is the trace function from $\mathbb{F}_{q^m}$ to $\mathbb{F}_{q}$. Then we have the following lemma. 

\begin{lemma} {\em ({\rm \cite{W1997}})}
Let ${\cal O}$ and $\Omega$ be defined as above. Then ${\cal O}$ is a projective $(n, m, h_1, h_2)$ set in PG$(m-1,q)$ if and only if $\psi_a(\Omega)=qh_1-n$ or $qh_2-n$, for any $\psi_a\in {\widehat{\mathbb{F}}_{q^m}}\setminus\{\psi_0\}$.
\end{lemma}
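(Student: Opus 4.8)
The plan is to reduce the lemma to the character-sum identity already recorded just before its statement, namely that for any nonprincipal additive character $\psi_a$ of $\mathbb{F}_{q^m}$,
$$\psi_a(\Omega) = q\,\bigl|a^{\perp}\cap\{y_1,\ldots,y_n\}\bigr| - n ,$$
so that the numerical condition $\psi_a(\Omega)\in\{qh_1-n,\,qh_2-n\}$ becomes literally equivalent to $\bigl|a^{\perp}\cap\{y_1,\ldots,y_n\}\bigr|\in\{h_1,h_2\}$.

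First I would establish this identity. Write $\Omega=\mathbb{F}_q^\ast{\cal O}$ as the disjoint union $\bigcup_{i=1}^{n}\mathbb{F}_q^\ast y_i$ (disjoint because the points $\langle y_i\rangle$ are pairwise distinct), and use transitivity of the trace, $\mathrm{Tr}_{q^m/p}=\mathrm{Tr}_{q/p}\circ\mathrm{Tr}_{q^m/q}$, to get
$$\psi_a(\Omega)=\sum_{i=1}^{n}\ \sum_{\alpha\in\mathbb{F}_q^\ast}\xi_p^{\mathrm{Tr}_{q/p}(\alpha\,\mathrm{Tr}_{q^m/q}(ay_i))} .$$
By orthogonality of the additive characters of $\mathbb{F}_q$, the inner sum equals $q-1$ when $\mathrm{Tr}_{q^m/q}(ay_i)=0$ and equals $-1$ otherwise; summing over $i$ gives the displayed identity, with $\bigl|a^{\perp}\cap\{y_1,\ldots,y_n\}\bigr|$ counting the indices $i$ for which $\mathrm{Tr}_{q^m/q}(ay_i)=0$.

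Next I would record the standard dictionary between nonprincipal characters and hyperplanes. For $a\in\mathbb{F}_{q^m}^{\ast}$ the map $y\mapsto\mathrm{Tr}_{q^m/q}(ay)$ is a nonzero $\mathbb{F}_q$-linear functional on $V_m(q)$ (as $\mathrm{Tr}_{q^m/q}$ is $\mathbb{F}_q$-linear and surjective), so $a^{\perp}$ is an $(m-1)$-dimensional $\mathbb{F}_q$-subspace; by nondegeneracy of the trace form every hyperplane of $\mathrm{PG}(m-1,q)$ arises in this way, and $a^{\perp}=(a')^{\perp}$ precisely when $a'\in\mathbb{F}_q^{\ast}a$. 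Because $a^{\perp}$ is a subspace, $\langle y_i\rangle\subseteq a^{\perp}$ iff $y_i\in a^{\perp}$, so $\bigl|a^{\perp}\cap\{y_1,\ldots,y_n\}\bigr|$ equals the number of points of ${\cal O}$ lying on the projective hyperplane determined by $a$. Combining this with the identity of the previous paragraph, $\psi_a(\Omega)\in\{qh_1-n,\,qh_2-n\}$ holds for every nonprincipal $\psi_a$ if and only if every hyperplane of $\mathrm{PG}(m-1,q)$ meets ${\cal O}$ in $h_1$ or $h_2$ points, i.e.\ if and only if ${\cal O}$ is a projective $(n,m,h_1,h_2)$ set. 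I do not expect a genuine obstacle here; the only points needing a little care are the reduction of the inner character sum to the vanishing of $\mathrm{Tr}_{q^m/q}(ay_i)$ via transitivity of the trace, and the observation that the $q-1$ scalar multiples $\alpha a$ ($\alpha\in\mathbb{F}_q^\ast$) yield the same hyperplane and the same value $\psi_a(\Omega)$, so that ranging over all nonprincipal $\psi_a$ is the same as ranging over all hyperplanes.
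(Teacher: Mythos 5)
Your proposal is correct and follows essentially the same route as the paper: the paper records the identity $\psi_a(\Omega)=q\,|a^{\perp}\cap\{y_1,\ldots,y_n\}|-n$ immediately before the lemma and deduces the statement from the correspondence between the hyperplanes $a^{\perp}$ and the nonprincipal characters $\psi_a$. You have merely filled in the orthogonality/trace-transitivity computation and the hyperplane dictionary in more detail, both of which are accurate.
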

It follows that  projective $(n,m,h_1,h_2)$ sets in PG$(m-1,q)$ and  $\mathbb{F}_q^*$-invariant PDS in $(V_m(q),+)$ are equivalent by the above lemma.


We will also need to use quadratic forms (or quadrics) in our construction of PDS.
\begin{definition}
Let $V$ be an $m$-dimensional vector space over a field $F$. A function $Q: V \rightarrow F$ is called a {\it quadratic form} if  
\[
\begin{array}{l}
\vspace{0.1cm}{\rm (i)\ For}\ a\in F,\ Q(ax)=a^2Q(x).\\
\vspace{0.1cm}{\rm (ii)\ The\ function}\ B: V\times V \rightarrow F,\ B(u,v):=Q(u+v)-Q(u)-Q(v)\ {\rm is\ bilinear}.
\end{array}
\]
We say that $Q$ is {\em nonsingular} if the subspace $W$ of $V$ with the property that $Q$ vanishes on $W$ and $B(w,v)=0$ for all $v\in V$ and $w\in W$ is the zero subspace. If the field $F$ has odd characteristic, then $Q$ is nonsingular if and only if 
$B$ is nondegenerate; that is, $B(u,v)=0$ for all $v\in V$ implies $u=0.$ But this may not be true if $F$ has characteristic $2$, because in that case, $Q$ may not vanish on the radical ${\rm Rad}(V)=\{w\in V\mid B(w,v)=0,\forall v\in V\}$. However, if $V$ is an even-dimensional vector space over an even-characteristic field $F$, it is still true that $Q$ is nonsingular if and only if $B$ is nondegenerate (cf. \cite[p.14]{Cameron}).
\end{definition}

We will consider $Q: \mathbb{F}_{q^{2m}}\rightarrow \mathbb{F}_q$, defined by $Q(x)={\rm Tr}_{q^m/q}\left(x^{q^m+1}\right), \forall x\in \mathbb{F}_{q^{2m}}$. 

\begin{lemma}\label{QF} {\em ({\rm \cite[Theorem 3.2]{CLeo}})}
The function $Q :\mathbb{F}_{q^{2m}}\rightarrow \mathbb{F}_q$  defined above is a nonsingular quadratic form. Furthermore, $Q$ is of elliptic type.
\end{lemma}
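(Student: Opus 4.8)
The plan is to verify the two defining properties of a quadratic form directly, and then to pin down the type via a dimension/character count. First I would check property (i): for $a\in\mathbb{F}_q$ we have $Q(ax)={\rm Tr}_{q^m/q}\big((ax)^{q^m+1}\big)={\rm Tr}_{q^m/q}\big(a^{q^m+1}x^{q^m+1}\big)$, and since $a\in\mathbb{F}_q\subseteq\mathbb{F}_{q^m}$ is fixed by the $q^m$-power map, $a^{q^m+1}=a^2$, so $Q(ax)=a^2Q(x)$. For property (ii), I would compute the associated form $B(u,v)=Q(u+v)-Q(u)-Q(v)={\rm Tr}_{q^m/q}\big(u^{q^m}v+uv^{q^m}\big)={\rm Tr}_{q^m/q}\big(\mathrm{N}(u,v)\big)$ where $u^{q^m}v+uv^{q^m}={\rm Tr}_{q^{2m}/q^m}(u^{q^m}v)$ already lies in $\mathbb{F}_{q^m}$; bilinearity over $\mathbb{F}_q$ is then immediate from $\mathbb{F}_q$-linearity of $x\mapsto x^{q^m}$ on $\mathbb{F}_{q^{2m}}$, additivity of traces, and the fact that the cross term is symmetric in $u,v$.

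Next I would establish nonsingularity. Since $\mathbb{F}_{q^{2m}}$ is an even-dimensional vector space over $\mathbb{F}_q$ (of dimension $2m$), by the remark in Definition 2.3 it suffices to show $B$ is nondegenerate, i.e.\ that $B(u,v)=0$ for all $v$ forces $u=0$. Suppose ${\rm Tr}_{q^m/q}\big({\rm Tr}_{q^{2m}/q^m}(u^{q^m}v)\big)={\rm Tr}_{q^{2m}/q}(u^{q^m}v)=0$ for all $v\in\mathbb{F}_{q^{2m}}$. As $v$ ranges over $\mathbb{F}_{q^{2m}}$ and $u\neq 0$, the product $u^{q^m}v$ ranges over all of $\mathbb{F}_{q^{2m}}$; but ${\rm Tr}_{q^{2m}/q}$ is not identically zero, a contradiction. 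Hence $u=0$ and $B$ is nondegenerate, so $Q$ is nonsingular regardless of the parity of $p$.

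Finally, for the type, I would count the number of zeros of $Q$, or equivalently evaluate $\sum_{x\in\mathbb{F}_{q^{2m}}}\chi\big({\rm Tr}_{q^m/q}(x^{q^m+1})\big)$ for a nontrivial additive character, and compare with the standard zero-count formulas for nonsingular quadratic forms in $2m$ variables: a hyperbolic (plus type) form has $q^{2m-1}+q^m-q^{m-1}$ zeros while an elliptic (minus type) form has $q^{2m-1}-q^m+q^{m-1}$ zeros. The key observation is that the number of nonzero $x$ with $Q(x)=0$ is the number of $x\in\mathbb{F}_{q^{2m}}^\ast$ with ${\rm Tr}_{q^m/q}\big(x^{q^m+1}\big)=0$; writing $x^{q^m+1}=\mathrm{N}_{q^{2m}/q^m}(x)$, which is a surjection of $\mathbb{F}_{q^{2m}}^\ast$ onto $\mathbb{F}_{q^m}^\ast$ with all fibers of size $q^m+1$, this count equals $(q^m+1)\cdot\#\{y\in\mathbb{F}_{q^m}^\ast: {\rm Tr}_{q^m/q}(y)=0\}=(q^m+1)(q^{m-1}-1)=q^{2m-1}-q^m+q^{m-1}-1$, so together with $x=0$ we get $q^{2m-1}-q^m+q^{m-1}$ zeros, matching the elliptic count. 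I expect the type determination to be the only real content here; properties (i), (ii), and nondegeneracy are short formal computations, while getting the zero-count bookkeeping exactly right (and invoking the right normalization of the elliptic/hyperbolic formulas) is the step most prone to slips, though it is cited from \cite[Theorem 3.2]{CLeo} and can be quoted directly.
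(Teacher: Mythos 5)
Your proposal is correct. Note that the paper itself gives no proof of this lemma: it is quoted verbatim from Cossidente--Storme \cite[Theorem 3.2]{CLeo}, so there is no in-paper argument to compare against. Your self-contained argument is the standard one and all the steps check out: $a^{q^m+1}=a^2$ for $a\in\mathbb{F}_q$ gives property (i); the polarization $B(u,v)={\rm Tr}_{q^m/q}(u^{q^m}v+uv^{q^m})={\rm Tr}_{q^{2m}/q}(u^{q^m}v)$ is $\mathbb{F}_q$-bilinear and nondegenerate because ${\rm Tr}_{q^{2m}/q}$ is not identically zero, and (in either characteristic) nondegeneracy of $B$ immediately forces the subspace $W$ in Definition 2.3 to be zero, so $Q$ is nonsingular. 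The type determination via the norm map $x\mapsto x^{q^m+1}$, surjective onto $\mathbb{F}_{q^m}^{*}$ with fibers of size $q^m+1$, gives $(q^m+1)(q^{m-1}-1)$ nonzero isotropic vectors, i.e.\ $q^{2m-1}-q^m+q^{m-1}$ zeros in total, which is the elliptic count; this is also exactly the cardinality appearing in Lemma 2.7 of the paper, so your computation is consistent with how the lemma is used downstream. The only cosmetic quibble is the intermediate notation $\mathrm{N}(u,v)$ for the cross term, which you immediately clarify; otherwise nothing is missing.
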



It is well known the zero-set of a nonsingular quadratic form defined on an even-dimensional vector space over $\mathbb{F}_q$ is a PDS (which is referred to as Example {\bf RT}2 in \cite{CK1986}).

\begin{lemma}\label{PDS1} {\em ({\rm \cite{CK1986, M1994}})}
Let $Q :\mathbb{F}_{q^{2m}}\rightarrow \mathbb{F}_q$ be a nonsingular quadratic form. Then 
$$X=\{x \in \mathbb{F}_{q^{2m}}\setminus \{0\}|~ Q(x)=0\}$$
is a regular $(q^{2m}, (q^m-\epsilon)(q^{m-1}+\epsilon), q^{2m-2}+\epsilon q^{m-1}(q-1)-2, q^{2m-2}+\epsilon q^{m-1})$-PDS in the additive group of $\mathbb{F}_{q^{2m}}$, where $\epsilon=1$ or $-1$ according as $Q$ is hyperbolic or elliptic. In particular, for $b\in \mathbb{F}_{q^{2m}}\setminus \{0\},$
\[
\begin{array}{l}
\varphi_b(X)=
\left\{\begin{array}{ll}
\vspace{0.2cm}\epsilon q^{m-1}(q-1)-1, & {\rm  if} \ \ Q(b)=0, \\
-\epsilon q^{m-1}-1,  & {\rm otherwise}.\
\end{array}
\right .
\end{array}
\]
\end{lemma}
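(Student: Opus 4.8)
The plan is to apply the spectral characterisation of partial difference sets in Theorem~\ref{PDS}. Two hypotheses are immediate: $0\notin X$ by definition, and since $Q(-x)=(-1)^2Q(x)=Q(x)$ we have $X=-X$, i.e.\ $X^{(-1)}=X$; thus $X$ will be regular once it is shown to be a PDS. It remains to (i) compute $k:=|X|$, (ii) evaluate $\varphi_b(X)$ for every $b\in\mathbb{F}_{q^{2m}}\setminus\{0\}$, and (iii) verify the identity $k^2=\mu q^{2m}+(\lambda-\mu)k+(k-\mu)$ together with the fact that the two resulting values of $\varphi_b(X)$ are exactly $\frac{(\lambda-\mu)\pm\sqrt{(\mu-\lambda)^2+4(k-\mu)}}{2}$ for the claimed $\lambda,\mu$.

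The engine is the classical evaluation of Weil sums of quadratic forms: if $R$ is a nonsingular quadratic form in $2m$ variables over $\mathbb{F}_q$ of type $\epsilon_R\in\{1,-1\}$ ($1$ for hyperbolic, $-1$ for elliptic), then $\sum_{x}\psi_{\mathbb{F}_q}(R(x))=\epsilon_R q^{m}$; in odd characteristic this comes from diagonalising $R$ and multiplying out quadratic Gauss sums, and in even characteristic from the analogous computation with the Arf invariant. For $t\in\mathbb{F}_q^{*}$ the form $tQ$ is nonsingular of the same type $\epsilon$ as $Q$, since $\{x:tQ(x)=0\}=\{x:Q(x)=0\}$ and the number of zeros of a nonsingular quadratic form in $2m$ variables determines its type. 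Inserting the indicator identity $\mathbf{1}[Q(x)=0]=\frac1q\sum_{t\in\mathbb{F}_q}\psi_{\mathbb{F}_q}(tQ(x))$ into the count of zeros gives
\[
\#\{x\in\mathbb{F}_{q^{2m}}:Q(x)=0\}=\frac1q\left(q^{2m}+(q-1)\epsilon q^{m}\right)=q^{2m-1}+\epsilon(q-1)q^{m-1},
\]
hence $k=|X|=q^{2m-1}+\epsilon(q-1)q^{m-1}-1=(q^{m}-\epsilon)(q^{m-1}+\epsilon)$.

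For the character values, fix $b\neq0$ and write $\varphi_b(x)=\psi_{\mathbb{F}_q}\left({\rm Tr}_{q^{2m}/q}(bx)\right)$ using transitivity of the trace. Then
\[
\varphi_b(X)=-1+\frac1q\sum_{t\in\mathbb{F}_q}\ \sum_{x\in\mathbb{F}_{q^{2m}}}\psi_{\mathbb{F}_q}\left(tQ(x)+{\rm Tr}_{q^{2m}/q}(bx)\right),
\]
and the $t=0$ term vanishes because $\varphi_b$ is nonprincipal. For $t\neq0$ I would complete the square: since $\mathbb{F}_{q^{2m}}$ is even-dimensional over $\mathbb{F}_q$, the polar form $B$ of $Q$ is nondegenerate (as noted after the definition of nonsingularity), so there is a unique $x_t$ with $B(x,x_t)=-t^{-1}{\rm Tr}_{q^{2m}/q}(bx)$ for all $x$; then $tQ(x)+{\rm Tr}_{q^{2m}/q}(bx)=tQ(x-x_t)-tQ(x_t)$, so the inner sum equals $\psi_{\mathbb{F}_q}(-tQ(x_t))\,\epsilon q^{m}$. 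Bilinearity forces $x_t=t^{-1}x_1$, hence $tQ(x_t)=t^{-1}Q(x_1)$; writing $\beta:=Q(x_1)$ (depending only on $b$) and substituting $s=t^{-1}$ gives
\[
\varphi_b(X)=-1+\epsilon q^{m-1}\sum_{s\in\mathbb{F}_q^{*}}\psi_{\mathbb{F}_q}(-s\beta)=
\begin{cases}
\epsilon q^{m-1}(q-1)-1,&\text{if }\beta=0,\\
-\epsilon q^{m-1}-1,&\text{if }\beta\neq0.
\end{cases}
\]

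It remains to identify when $\beta=0$ and to match parameters. The assignment $b\mapsto\beta$ is the quadratic form $Q^{*}$ obtained from $Q$ by transport along the linear automorphism of $\mathbb{F}_{q^{2m}}$ that intertwines $B$ with the nondegenerate trace form $(x,y)\mapsto{\rm Tr}_{q^{2m}/q}(xy)$; as a transport of $Q$ it is nonsingular of the same type $\epsilon$, so $\#\{b\neq0:Q^{*}(b)=0\}=k$. Hence exactly $k$ nonprincipal characters take the value $\epsilon q^{m-1}(q-1)-1$ and the remaining $q^{2m}-1-k$ take $-\epsilon q^{m-1}-1$; a direct computation shows that, for $\lambda=q^{2m-2}+\epsilon q^{m-1}(q-1)-2$ and $\mu=q^{2m-2}+\epsilon q^{m-1}$, one has $(\mu-\lambda)^2+4(k-\mu)=q^{2m}$, that these two values are $\frac{(\lambda-\mu)\pm q^{m}}{2}$, and that $k^2=\mu q^{2m}+(\lambda-\mu)k+(k-\mu)$, so Theorem~\ref{PDS} yields that $X$ is a regular $(q^{2m},k,\lambda,\mu)$-PDS with the stated parameters. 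For the concrete form $Q(x)={\rm Tr}_{q^{m}/q}(x^{q^{m}+1})$ of Lemma~\ref{QF} one computes $B(x,y)={\rm Tr}_{q^{2m}/q}(x^{q^{m}}y)$, so the intertwining map is the Frobenius $x\mapsto x^{q^{m}}$, an involution which is an isometry of $Q$; therefore $Q^{*}=Q$ and the dichotomy is precisely ``$Q(b)=0$ versus $Q(b)\neq0$''. The step I expect to require the most care is the uniform treatment of the Weil-sum evaluation across even and odd characteristic (the Gauss-sum bookkeeping, and the matching of the algebraic sign $\epsilon$ with the geometric type of $Q$), together with the self-duality computation just described if one insists on reading the character formula verbatim in terms of $Q(b)$ for an arbitrary nonsingular $Q$; everything else is routine arithmetic.
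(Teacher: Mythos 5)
Your argument is essentially correct, but note that the paper does not prove this lemma at all: it is quoted as a known result (Example {\bf RT}2 of Calderbank--Kantor \cite{CK1986}, see also Ma \cite{M1994}), so there is no in-paper proof to compare against. What you have written is a legitimate self-contained derivation: the count $|X|$ and the character values via the orthogonality trick $\mathbf{1}[Q(x)=0]=\frac1q\sum_t\psi_{\mathbb{F}_q}(tQ(x))$, completing the square using nondegeneracy of the polar form $B$ (which, as the paper notes after Definition 2.3, is equivalent to nonsingularity of $Q$ on an even-dimensional space in either characteristic), and then the spectral criterion of Theorem \ref{PDS}. The arithmetic checks out: $(\mu-\lambda)^2+4(k-\mu)=q^{2m}$ and the two computed character sums are exactly $\frac{(\lambda-\mu)\pm q^m}{2}$. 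You were also right to flag the one genuinely delicate point, namely that the dichotomy you obtain is governed by the dual form $Q^{*}(b)=Q(x_1(b))$ rather than by $Q(b)$ itself; for a general nonsingular $Q$ the lemma's phrase ``if $Q(b)=0$'' should be read up to the linear change of variable intertwining $B$ with the trace pairing (which does not affect the PDS conclusion, since $Q^*$ has the same type and the same number of zeros), while for the specific form $Q(x)={\rm Tr}_{q^m/q}(x^{q^m+1})$ actually used in Lemma \ref{PDS2} your computation $B(x,y)={\rm Tr}_{q^{2m}/q}(x^{q^m}y)$, $x_1(b)=-b^{q^m}$, $Q(x_1(b))=Q(b)$ shows the statement holds verbatim. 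The only ingredient you leave as a citation is the classical evaluation $\sum_x\psi_{\mathbb{F}_q}(R(x))=\epsilon_Rq^m$ for a nonsingular form of type $\epsilon_R$ in $2m$ variables (Gauss sums in odd characteristic, Arf invariant in characteristic $2$); that is standard and acceptable at the level at which the paper itself cites the whole lemma.
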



\begin{definition}
Let $\alpha$ be a fixed primitive element of $\mathbb{F}_{q^{2m}}$, and let $N$ be a divisor of $q^{2m}-1$. The $N^{\rm th}$ {\em cyclotomic classes} $C_0^{(N, q^{2m})}, C_1^{(N, q^{2m})}, \ldots, C_{N-1}^{(N, q^{2m})}$ are defined by 
$$ C_i^{(N, q^{2m})}=\left\{\alpha^{i+Nj}~\big{|} ~0\leq j< \frac{q^{2m}-1}{N}\right\},$$ 
where $0\leq i<N.$
\end{definition}

The zero-set of the quadratic form $Q(x)={\rm Tr}_{q^m/q}\left(x^{q^m+1}\right)$ has the following cyclotomic description.
\begin{lemma}\label{QF1}
Let $\alpha$ be a fixed primitive element of $\mathbb{F}_{q^{2m}}$, and let $\omega=\alpha^{q^m+1}$. Let $Q :\mathbb{F}_{q^{2m}}\rightarrow \mathbb{F}_q$ be defined as above. Then 
$$\{x \in \mathbb{F}_{q^{2m}}\setminus \{0\}|~ Q(x)=0\}=\bigcup\limits_{i \in I}C_{i}^{(\frac{q^m-1}{q-1}, q^{2m})},$$
where $I=\left\{i~|~ {\rm Tr}_{q^m/q}(\omega^i)=0,\ 0\leq i<\frac{q^m-1}{q-1}\right\}$, and $|I|=\frac{q^{m-1}-1}{q-1}$.
\end{lemma}
\begin{proof}
For any integers $s$ and $j$, we have
$$Q\left(\alpha^{(q^m-1)s+j}\right)={\rm Tr}_{q^m/q}\left(\alpha^{(q^m+1)(q^m-1)s+(q^m+1)j}\right)
={\rm Tr}_{q^m/q}\left(\omega^j\right).$$
It follows that
$$\{x \in \mathbb{F}_{q^{2m}}\setminus \{0\}|~ Q(x)=0\}=\bigcup\limits_{j \in J}C_{j}^{(q^m-1, q^{2m})}, $$
where $J=\left\{j~|~ {\rm Tr}_{q^m/q}(\omega^j)=0,\ 0\leq j<q^m-1\right\}$, $|J|=q^{m-1}-1$. Noting that 
\[
\begin{array}{l}
\vspace{0.2cm}J=\left\{j~|~ {\rm Tr}_{q^m/q}(\omega^j)=0,\ 0\leq j<q^m-1\right\}\\
\vspace{0.2cm}\hspace{0.3cm}=\left\{i+\frac{\ell (q^m-1)}{q-1}~|~ {\rm Tr}_{q^m/q}\left(\omega^{i+\frac{\ell (q^m-1)}{q-1}}\right)=0,\ 0\leq i<\frac{q^m-1}{q-1},\ 0\leq \ell <q-1\right\}\\
\vspace{0.2cm}\hspace{0.3cm}=\left\{i+\frac{\ell (q^m-1)}{q-1}~|~ i\in I,\ 0\leq \ell <q-1\right\},
\end{array}
\]
we have 
$$\{x \in \mathbb{F}_{q^{2m}}\setminus \{0\}|~ Q(x)=0\}=\bigcup\limits_{j \in J}C_{j}^{(q^m-1, q^{2m})}=\bigcup\limits_{i \in I}C_{i}^{(\frac{q^m-1}{q-1}, q^{2m})}.$$ 
The proof is now complete.  \qed 
\end{proof}

Note that the quadratic form $Q(x)={\rm Tr}_{q^m/q}\left(x^{q^m+1}\right)$ is of elliptic type. Applying Lemmas \ref{PDS1} and \ref{QF1} to this nonsingular quadratic form yields the following result.

\begin{lemma}\label{PDS2} 
Let $Q :\mathbb{F}_{q^{2m}}\rightarrow \mathbb{F}_q$ be given by $Q(x)={\rm Tr}_{q^m/q}\left(x^{q^m+1}\right)$. Let $\omega$ be a primitive element of $\mathbb{F}_{q^{m}}$ and  $I=\left\{i~|~ {\rm Tr}_{q^m/q}(\omega^i)=0,\ 0\leq i<\frac{q^m-1}{q-1}\right\}$. Then 
$$\{x \in \mathbb{F}_{q^{2m}}\setminus \{0\}|~ Q(x)=0\}=\bigcup\limits_{i \in I}C_{i}^{(\frac{q^m-1}{q-1}, q^{2m})}$$
is a regular $(q^{2m}, (q^{m}+1)(q^{m-1}-1), q^{2m-2}-q^{m-1}(q-1)-2, q^{2m-2}-q^{m-1})$-PDS in the additive group of $\mathbb{F}_{q^{2m}}$. In particular, for $b\in \mathbb{F}_{q^{2m}}\setminus \{0\},$
\[
\begin{array}{l}
\varphi_b\left( \bigcup\limits_{i \in I}C_{i}^{(\frac{q^m-1}{q-1}, q^{2m})} \right)=
\left\{\begin{array}{ll}
\vspace{0.2cm}(q^{m-1}-1)-q^m, & {\rm  if} \ \ b\in \bigcup\limits_{i \in I}C_{i}^{(\frac{q^m-1}{q-1}, q^{2m})}, \\
q^{m-1}-1,  & {\rm otherwise}.\
\end{array}
\right .
\end{array}
\]
\end{lemma}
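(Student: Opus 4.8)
The plan is to translate everything into character values and then invoke the
spectral characterization of PDS from Theorem~\ref{PDS}. First I would observe
that Lemma~\ref{PDS1} already gives us the parameters and the character values of
the zero-set $X$ of a nonsingular elliptic quadratic form on $\mathbb{F}_{q^{2m}}$;
for our specific $Q(x)={\rm Tr}_{q^m/q}(x^{q^m+1})$ we have $\epsilon=-1$ by
Lemma~\ref{QF}, so the parameter quadruple becomes
$(q^{2m}, (q^m+1)(q^{m-1}-1), q^{2m-2}-q^{m-1}(q-1)-2, q^{2m-2}-q^{m-1})$ and the
character values are $-\epsilon q^{m-1}(q-1)-1=-q^{m-1}(q-1)-1=(q^{m-1}-1)-q^m$
when $Q(b)=0$ and $\epsilon q^{m-1}-1 = \ldots$ wait, $-\epsilon q^{m-1}-1 = q^{m-1}-1$
when $Q(b)\neq 0$. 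This is exactly the displayed formula in the statement, so the
character computation reduces to what Lemma~\ref{PDS1} gives. Then by
Lemma~\ref{QF1} (or its restated form), the zero-set $X$ equals the union of
cyclotomic classes $\bigcup_{i\in I} C_i^{((q^m-1)/(q-1),\,q^{2m})}$ with $I$ as
described and $|I|=(q^{m-1}-1)/(q-1)$. Since $X$ is $\mathbb{F}_q^*$-invariant and
$|C_i^{((q^m-1)/(q-1),\,q^{2m})}| = (q-1)(q^m+1)$, counting gives
$|X| = |I|\cdot(q-1)(q^m+1) = (q^{m-1}-1)(q^m+1)$, matching $k$.

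The key steps, in order: (1) invoke Lemma~\ref{QF} to fix $\epsilon=-1$;
(2) substitute $\epsilon=-1$ into the general PDS parameters and character values
of Lemma~\ref{PDS1} and simplify to obtain exactly the quadruple and the two
character values stated; (3) invoke Lemma~\ref{QF1} to rewrite the zero-set as the
stated union of cyclotomic classes, checking the index count $|I|=(q^{m-1}-1)/(q-1)$
and hence $k = (q^{m-1}-1)(q^m+1)$; (4) note that $\varphi_b(X)$ takes the value
$(q^{m-1}-1)-q^m$ precisely when $b$ lies in the zero-set $X$ itself (because
$Q(b)=0 \iff \langle b\rangle$ is a point of the quadric $\iff b\in X\cup\{0\}$,
and $b\neq 0$), and the other value otherwise; this is the ``in particular'' clause.
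Strictly speaking one should also verify the arithmetic identity
$k^2=\mu v + (\lambda-\mu)k + (k-\mu)$ so that Theorem~\ref{PDS} applies, but this
is a routine check already subsumed in Lemma~\ref{PDS1}.

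I do not expect any genuine obstacle here: the lemma is essentially a corollary
obtained by specializing Lemma~\ref{PDS1} to elliptic type and feeding in the
cyclotomic description from Lemma~\ref{QF1}. The only mildly delicate point is the
bookkeeping in step (2) — making sure the signs coming from $\epsilon=-1$ are
propagated correctly through $\epsilon q^{m-1}(q-1)-1$, $-\epsilon q^{m-1}-1$, and
the $\lambda,\mu$ expressions, and that $(q^m-\epsilon)(q^{m-1}+\epsilon)$ indeed
collapses to $(q^m+1)(q^{m-1}-1)$. A second point worth stating explicitly is
\emph{why} the condition ``$Q(b)=0$'' in Lemma~\ref{PDS1} is equivalent to ``$b\in
X$'' in the present formulation: this is immediate since $X=\{x\neq 0: Q(x)=0\}$,
but it is the step that converts the quadric-membership condition into the
self-referential cyclotomic condition appearing in the statement. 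Everything else
is direct substitution.
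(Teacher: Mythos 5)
Your proposal is correct and follows exactly the route the paper takes: the paper derives this lemma by specializing Lemma~\ref{PDS1} to $\epsilon=-1$ (justified by Lemma~\ref{QF}) and substituting the cyclotomic description of the zero-set from Lemma~\ref{QF1}, with the condition $Q(b)=0$ for $b\neq 0$ being precisely membership in $X$. The only blemish is the transient sign slip in your narration (``$-\epsilon q^{m-1}(q-1)-1$'' should read ``$\epsilon q^{m-1}(q-1)-1$''), which you immediately correct to the right value $(q^{m-1}-1)-q^m$.
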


\section{Partial difference sets from quadratic forms and cyclotomy}
Let $\alpha$ be a fixed primitive element of  $\mathbb{F}_{q^{2m}}$. Set $\omega=\alpha^{q^m+1}$, and view $\mathbb{F}_{q^m}$ as an $m$-dimensional vector space over $\mathbb{F}_q$. For $1\leq r<m,$ let $R$ be an $r$-dimensional vector subspace of $\mathbb{F}_{q^{m}}$ over $\mathbb{F}_{q}$. Set
\begin{equation}
\label{dimv}
T=\left\{t~|~\omega^t\in R,\ 0\leq t<\frac{q^m-1}{q-1}\right\}.
\end{equation}
Clearly,  
$R=\left\{\omega^{t+\frac{\ell (q^m-1)}{q-1}}~|~ t\in T,\ 0\leq \ell<q-1 \right\}\cup\{0\}.$ Then we have 
$|T|=\frac{q^r-1}{q-1}.$

\begin{lemma}\label{QF2}
Let $T$ be defined as in (\ref{dimv}). Then for each $u$, $0\leq u<q^{2m}-1$,
$$\left|\left\{ \alpha^{t+u}~|~ t\in T\right\}\cap \left(\bigcup\limits_{i \in I}C_{i}^{(\frac{q^m-1}{q-1}, q^{2m})}\right)\right|=\frac{q^r-1}{q-1}\ {\rm or}\ \frac{q^{r-1}-1}{q-1},$$
where $I=\left\{i~|~ {\rm Tr}_{q^m/q}(\omega^i)=0,\ 0\leq i<\frac{q^m-1}{q-1}\right\}$.
\end{lemma}
\begin{proof}
Since Tr$_{q^m/q}$ is an $\mathbb{F}_q$-linear transformation from $\mathbb{F}_{q^{m}}$ to $\mathbb{F}_{q}$,  the set 
$$\{ x \in \mathbb{F}_{q^{m}}~|~ {\rm Tr}_{q^m/q}(\omega^v x)=0\}$$
 forms an $(m-1)$-dimensional $\mathbb{F}_q$-subspace of $\mathbb{F}_{q^{m}}$ for each $0\leq v<q^{m}-1$. Noting that $R$ is an $r$-dimensional $\mathbb{F}_q$-subspace of $\mathbb{F}_{q^{m}}$, by the dimension formula, we obtain 
$${\rm dim}_{\mathbb{F}_{q}}\left( R \cap \{ x \in \mathbb{F}_{q^{m}} ~|~ {\rm Tr}_{q^m/q}(\omega^v x)=0\}\right)=r\ {\rm or}\ r-1, $$
i.e.
$$\left|\left( R\setminus \{0\} \right)\cap \{ x \in \mathbb{F}_{q^{m}}^*\mid  {\rm Tr}_{q^m/q}(\omega^v x)=0\}\right|=q^r-1\ {\rm or}\ q^{r-1}-1.$$
From the definition of $T$ (that is, viewing $R$ projectively), we have 
\[
\begin{array}{l}
\vspace{0.2cm}\left|\left\{\omega^{t}~|~ t\in T \right\}\cap \{ x \in \mathbb{F}_{q^{m}}^*~|~ {\rm Tr}_{q^m/q}(\omega^v x)=0\}\right|\\
\vspace{0.2cm}=\frac{1}{q-1}\left|\left\{\omega^{t}~|~ t\in R \setminus \{0\} \right\}\cap \{ x \in \mathbb{F}_{q^{m}}^*~|~ {\rm Tr}_{q^m/q}(\omega^v x)=0\}\right|\\
\vspace{0.2cm}=\frac{q^r-1}{q-1}\ {\rm or}\ \frac{q^{r-1}-1}{q-1}.
\end{array}
\]
It follows that
$$\left|\left\{\omega^{t+v}~|~ t\in T \right\}\cap \{ x \in \mathbb{F}_{q^{m}}^*~|~ {\rm Tr}_{q^m/q}(x)=0\}\right|=\frac{q^r-1}{q-1}\ {\rm or}\ \frac{q^{r-1}-1}{q-1},$$
i.e.
$$\left|\left\{t+v \pmod{q^m-1 }~|~ t\in T \right\}\cap J \right|=\frac{q^r-1}{q-1}\ {\rm or}\ \frac{q^{r-1}-1}{q-1},$$
where $J=\{ j~|~ {\rm Tr}_{q^m/q}(\omega^j)=0,\ 0\leq j <q^m-1 \}$ and $0\leq v<q^{m}-1$. 

For each $u$, $0\le u<q^{2m}-1$, define two sets  
$$A:=\left\{t+u \pmod{q^{m}-1 }~|~ t\in T \right\}\cap J$$
and
$$B:=\left\{t+u \pmod{q^{2m}-1 }~|~ t\in T \right\}\cap \{ j+\ell (q^m-1) ~|~ j\in J,\ 0\leq \ell < q^m+1\}.$$  
We claim that $|A|=|B|$.
On the one hand, for any $x\in A$, we have 
$$x=j_0=t_0+u-\ell_0(q^m-1),$$ 
where $j_0\in J$ and $\ell_0$ is a nonnegative integer, implying $x+\ell_0(q^m-1)=j_0+\ell_0(q^m-1)=t_0+u$. Thus, 
$$x+\ell_0(q^m-1)\in B,$$
where $\ell_0$ is uniquely determined by $x$. 
On the other hand, for any  $y\in B,$ we have $$y=j_0+\ell_0(q^m-1)=t_0+u+s(q^{2m}-1).$$ 
It follows that $y\equiv j_0\equiv t_0+u\pmod {q^m-1}$. Note that $0\leq j_0<q^m-1$, we have $j_0\in A$. Suppose that there is some other $z\in B$ such that $z\equiv j_0\pmod {q^m-1}$, i.e.,
 $$z=j_0+\ell_1(q^m-1)=t_1+u+s_1(q^{2m}-1).$$
Then, we deduce that $t_1=t_0+(s-s_1)(q^{2m}-1)+(\ell_1-\ell_0)(q^m-1)$. By the definition of $T$, we get $\ell_0=\ell_1$, which implies $x=y$. Thus $|A|=|B|$; and from which  it follows that 
$$\Big|\left\{t+u \pmod{q^{2m}-1 }~|~ t\in T \right\}\cap \{ j+\ell (q^m-1) ~|~ j\in J,\ 0\leq \ell < q^m+1\}\Big|=\frac{q^r-1}{q-1}\ {\rm or}\ \frac{q^{r-1}-1}{q-1}.$$
Observing that
$$\bigcup\limits_{i \in I}C_{i}^{(\frac{q^m-1}{q-1}, q^{2m})}=\bigcup\limits_{j \in J}C_{j}^{(q^m-1, q^{2m})},$$
we have
$$\left|\left\{ \alpha^{t+u}~|~ t\in T\right\}\cap \left(\bigcup\limits_{i \in I}C_{i}^{(\frac{q^m-1}{q-1}, q^{2m})}\right)\right|=\left|\left\{ \alpha^{t+u}~|~ t\in T\right\}\cap \left(\bigcup\limits_{j \in J}C_{j}^{(q^m-1, q^{2m})}\right)\right|=\frac{q^r-1}{q-1}\ {\rm or}\ \frac{q^{r-1}-1}{q-1}$$
for each $u, 0\leq u<q^{2m}-1$. The proof is now complete.  \qed 
\end{proof}

Let $T$ be defined as in (\ref{dimv}). We define the following subset $\D$ of $\mathbb{F}_{q^m}\times \mathbb{F}_{q^{2m}}$. 
$$\D:=\bigcup\limits_{i=0}^{\frac{q^m-q}{q-1}}\left(C_i^{(\frac{q^m-1}{q-1}, q^{m})}\times \bigcup\limits_{t\in T}C_{i+t}^{(\frac{q^m-1}{q-1}, q^{2m})}\right)\cup \left(\mathbb{F}_{q^{m}}^{\ast}\times \{0\}\right).$$
Let $\alpha$ be a fixed primitive element of  $\mathbb{F}_{q^{2m}}$ and set $\omega=\alpha^{q^m+1}$. The subset $\D$ is nothing but

$$\D=\bigcup\limits_{i=0}^{\frac{q^m-q}{q-1}}\left(\omega^i\mathbb{F}_q^{\ast} \times \bigcup\limits_{t\in T}C_{i+t}^{(\frac{q^m-1}{q-1}, q^{2m})}\right)\cup \left(\mathbb{F}_{q^{m}}^{\ast}\times \{0\}\right).$$
%
Now we are ready to prove the main theorem of this paper, which says that $\D$ is a PDS with Denniston parameters in the additive group of $\mathbb{F}_{q^{m}}\times \mathbb{F}_{q^{2m}}$.
\begin{theorem}\label{MT}
The set $\D$ defined above  is a $(q^{3m}, (q^{m+r}-q^m+q^r)(q^m-1), q^m-q^r+(q^{m+r}-q^m+q^r)(q^r-2), (q^{m+r}-q^m+q^r)(q^r-1))$-PDS in the additive group of $\mathbb{F}_{q^{m}}\times \mathbb{F}_{q^{2m}}$. 
\end{theorem}

\begin{proof}
First of all, we compute $|\D|=\frac{q^m-1}{q-1}(q-1)\cdot |T|\cdot (q^m+1)(q-1)+(q^m-1)=(q^{m+r}-q^m+q^r)(q^m-1)$. Every character $\phi$ of the additive group of $\mathbb{F}_{q^{m}}\times \mathbb{F}_{q^{2m}}$ can be written as $\phi=\psi_a\times \varphi_b$, where $\psi_a$ and $\varphi_b$ are additive characters of  $\mathbb{F}_{q^{m}}$ and $\mathbb{F}_{q^{2m}}$, respectively.  Each nonprincipal character $\phi$ falls into one of the following three cases:

\begin{description}
	\item [Case (i)]  $\psi_a=\psi_0$ and $\varphi_b \neq \varphi_0$. 

In this case, we have $\psi_0\left(\mathbb{F}_{q^{m}}^{\ast}\right)=q^m-1$ and $\psi_0\left(C_i^{(\frac{q^m-1}{q-1}, q^{m})}\right)=\psi_0(\omega^i \mathbb{F}_q^{\ast})=q-1$ for each $0\leq i<\frac{q^m-1}{q-1}$. We calculate $\phi({\cal D})$ as follows:
\[
\begin{array}{l}
\vspace{0.2cm}\phi({\cal D})=\sum\limits_{i=0}^{\frac{q^m-q}{q-1}}\psi_0\left(C_i^{(\frac{q^m-1}{q-1}, q^{m})}\right)\varphi_b\left(\bigcup\limits_{t\in T} C_{i+t}^{(\frac{q^m-1}{q-1}, q^{2m})}\right)+\psi_0\left(\mathbb{F}_{q^{m}}^{\ast}\right)\varphi_b\left(0\right)\\
\vspace{0.2cm}\hspace{0.85cm}=(q-1)\sum\limits_{i=0}^{\frac{q^m-q}{q-1}}\varphi_b\left( \bigcup\limits_{t\in T}C_{i+t}^{(\frac{q^m-1}{q-1}, q^{2m})}\right)+q^m-1\\
\vspace{0.2cm}\hspace{0.85cm}=(q-1)\sum\limits_{t\in T}\varphi_b\left(  \bigcup\limits_{i=0}^{\frac{q^m-q}{q-1}} C_{i+t}^{(\frac{q^m-1}{q-1}, q^{2m})}\right)+q^m-1\\
\vspace{0.2cm}\hspace{0.85cm}=(q-1)\sum\limits_{t\in T}\varphi_b\left( \mathbb{F}_{q^{2m}}^{\ast} \right)+q^m-1\\
\vspace{0.2cm}\hspace{0.85cm}=-(q-1)|T|+q^m-1\\
\vspace{0.2cm}\hspace{0.85cm}=q^m-q^r.
\end{array}
\]

\item [Case(ii)] $\psi_a\neq \psi_0$ and $\varphi_b=\varphi_0$.

 Since $|T|=\frac{q^r-1}{q-1}$, we have $\varphi_0\left(\bigcup\limits_{t\in T} C_{i+t}^{(\frac{q^m-1}{q-1}, q^{2m})}\right)=(q^r-1)(q^m+1)$ for each $0\leq i<\frac{q^m-1}{q-1}$.  We calculate $\phi({\cal D})$ as follows:
\[
\begin{array}{l}
\vspace{0.2cm}\phi({\cal D})=\sum\limits_{i=0}^{\frac{q^m-q}{q-1}}\psi_a\left(C_i^{(\frac{q^m-1}{q-1}, q^{m})}\right)\varphi_0\left(\bigcup\limits_{t\in T} C_{i+t}^{(\frac{q^m-1}{q-1}, q^{2m})}\right)+\psi_a\left(\mathbb{F}_{q^{m}}^{\ast}\right)\varphi_0\left(0\right)\\
\vspace{0.2cm}\hspace{0.85cm}=(q^r-1)(q^m+1)\sum\limits_{i=0}^{\frac{q^m-q}{q-1}}\psi_a\left( C_i^{(\frac{q^m-1}{q-1}, q^{m})}\right)-1\\
\vspace{0.2cm}\hspace{0.85cm}=-q^{m+r}+q^m-q^r.
\end{array}
\]

\item [Case (iii)] $\psi_a \neq \psi_0$ and $\varphi_b \neq \varphi_0$.

 Let $a=\omega^c$ for some $0\le c<q^m-1$. For each $0\leq i< \frac{q^m-1}{q-1}$, let $A_{i}$ be the subset of the character group ${\widehat{\mathbb{F}}_{q^m}}$ consisting of all characters which are principal on $C_i^{(\frac{q^m-1}{q-1}, q^{m})}=\omega^i\mathbb{F}_q^{\ast}$. Then 
\begin{equation}
\label{TR}
\begin{aligned}
\vspace{0.2cm}A_{i}&=\left\{ \psi_{\omega^c} \in {\widehat{\mathbb{F}}_{q^m}}\mid {\rm Tr}_{q^m/p}\left(\omega^{c+i}s\right)=0,\;\forall s\in \mathbb{F}_q\right\}\cup\{\chi_0\}\\
\vspace{0.2cm}\hspace{0.45cm}&=\left\{ \psi_{\omega^c} \in {\widehat{\mathbb{F}}_{q^m}}\mid  {\rm Tr}_{q^m/q}\left(\omega^{c+i}\right)=0\right\}\cup\{\chi_0\}.
\end{aligned}
\end{equation}

For each $0\leq i<\frac{q^m-1}{q-1}$, we have  
\[
\begin{array}{l}
\psi_a\left(C_i^{(\frac{q^m-1}{q-1}, q^{m})}\right)=
\left\{\begin{array}{ll}
\vspace{0.2cm}q-1, & {\rm  if} \ \ \psi_a \in A_i, \\
-1,  & {\rm otherwise}.\
\end{array}
\right .
\end{array}
\]
For each $s\in \mathbb{F}_{q^m}^{\ast}$, let $B_s=\left\{i~|~\psi_s \in A_i, 0\leq i<\frac{q^m-1}{q-1}\right\}$. By (\ref{TR}), we have 
\begin{equation}
\label{TR1}
\begin{aligned}
B_a=\left\{j-c~|~ j\in I \right\}\ {\rm and}\ 
\psi_a\left(C_i^{(\frac{q^m-1}{q-1}, q^{m})}\right)=
\left\{\begin{array}{ll}
\vspace{0.2cm}q-1, & {\rm  if} \ \ i \in B_a, \\
-1,  & {\rm otherwise},\
\end{array}
\right .
\end{aligned}
\end{equation}
where $a=\omega^c$ and $I=\left\{i~|~ {\rm Tr}_{q^m/q}(\omega^i)=0,\ 0\leq i<\frac{q^m-1}{q-1}\right\}.$
Now we calculate $\phi({\cal D})$ as follows:
\[
\begin{array}{l}
\vspace{0.2cm}\phi({\cal D})=\sum\limits_{i=0}^{\frac{q^m-q}{q-1}}\psi_a\left(\omega^i\mathbb{F}_q^*\right)\varphi_b\left(\bigcup\limits_{t\in T} C_{i+t}^{(\frac{q^m-1}{q-1}, q^{2m})}\right)+\psi_a\left(\mathbb{F}_{q^{m}}^{\ast}\right)\varphi_b\left(0\right)\\
\vspace{0.2cm}\hspace{0.85cm}=(q-1)\sum\limits_{i\in B_a}\varphi_b\left(\bigcup\limits_{t\in T} C_{i+t}^{(\frac{q^m-1}{q-1}, q^{2m})}\right)-\sum\limits_{i\notin B_a}\varphi_b\left(\bigcup\limits_{t\in T} C_{i+t}^{(\frac{q^m-1}{q-1}, q^{2m})}\right)-1\\
\vspace{0.2cm}\hspace{0.85cm}=q\sum\limits_{i\in B_a}\varphi_b\left(\bigcup\limits_{t\in T} C_{i+t}^{(\frac{q^m-1}{q-1}, q^{2m})}\right)+|T|-1\\
\vspace{0.2cm}\hspace{0.85cm}=q\sum\limits_{i \in I}\sum\limits_{t\in T}\varphi_b\left(C_{i+t-c}^{(\frac{q^m-1}{q-1}, q^{2m})}\right)+\frac{q^r-q}{q-1}\\
\vspace{0.2cm}\hspace{0.85cm}=q\sum\limits_{t\in T}\varphi_{b\alpha^{t-c}}\left(\bigcup\limits_{i \in I}C_{i}^{(\frac{q^m-1}{q-1}, q^{2m})}\right)+\frac{q^r-q}{q-1}.
\end{array}
\]
By Lemma \ref{QF2}, we know that
$$\left|\left\{ b\alpha^{t-c}~|~ t\in T\right\}\cap \left(\bigcup\limits_{i \in I}C_{i}^{(\frac{q^m-1}{q-1}, q^{2m})}\right)\right|=\frac{q^r-1}{q-1}\ {\rm or}\ \frac{q^{r-1}-1}{q-1}$$
for any $b\in \mathbb{F}_{q^{2m}}\setminus\{0\}$, where $I=\left\{i~|~ {\rm Tr}_{q^m/q}(\omega^i)=0,\ 0\leq i<\frac{q^m-1}{q-1}\right\}$. 
Applying Lemma \ref{PDS2}, we obtain 
\[
\begin{array}{l}
\vspace{0.2cm}\phi({\cal D})=q\sum\limits_{t\in T}\varphi_{b\alpha^{t-c}}\left(\bigcup\limits_{i \in I}C_{i}^{(\frac{q^m-1}{q-1}, q^{2m})}\right)+\frac{q^r-q}{q-1}\\
\vspace{0.2cm}\hspace{0.85cm}=
\left\{\begin{array}{ll}
\vspace{0.2cm}q(q^{m-1}-q^{m}-1)\frac{q^r-1}{q-1}+\frac{q^r-q}{q-1}, \; \;\;\;{\rm  if} \ \left\{ b\alpha^{t-c}~|~ t\in T\right\} \subset \bigcup\limits_{i \in I}C_{i}^{(\frac{q^m-1}{q-1}, q^{2m})}, \\
q(q^{m-1}-q^{m}-1)\frac{q^{r-1}-1}{q-1}+q(q^{m-1}-1)q^{r-1}+\frac{q^r-q}{q-1},  \;\;\; {\rm otherwise},\\
\end{array}
\right .\\
\vspace{0.2cm}\hspace{0.85cm}=
\left\{\begin{array}{ll}
\vspace{0.2cm}-q^{m+r}+q^{m}-q^r, & {\rm  if} \ \left\{ b\alpha^{t-c}~|~ t\in T\right\} \subset \bigcup\limits_{i \in I}C_{i}^{(\frac{q^m-1}{q-1}, q^{2m})}, \\
q^{m}-q^{r},  & {\rm otherwise}.\
\end{array}
\right .
\end{array}
\]
\end{description}
Thus, for every nonprincipal character $\phi$, we have shown that 
$$\phi(\D)=q^{m}-q^{r}\  \mathrm{or} \ -q^{m+r}+q^{m}-q^r,$$
It follows from Theorem \ref{PDS} that $\D$ is a PDS. The proof is now complete.  \qed 

\remark
The PDS $\D$ in the above theorem is $\mathbb{F}_q^*$-invariant, but not $\mathbb{F}_{q^m}^*$-invariant. This can be seen as follows. 
For any $k$, $1\leq k<q^m-1$, we have 
\[
\begin{array}{l}
\vspace{0.2cm} \omega^k{\cal D}=\bigcup\limits_{i=0}^{\frac{q^m-q}{q-1}}\left[\left(\omega^kC_i^{(\frac{q^m-1}{q-1}, q^{m})}\right)\times \left( \bigcup\limits_{t\in T}\omega^kC_{i+t}^{(\frac{q^m-1}{q-1}, q^{2m})}\right)\right]\cup \left(\omega^k\mathbb{F}_{q^{m}}^{\ast}\times \{0\}\right)\\
\vspace{0.2cm}\hspace{0.8cm}=\bigcup\limits_{i=0}^{\frac{q^m-q}{q-1}}\left[C_{i+k}^{(\frac{q^m-1}{q-1}, q^{m})}\times \left( \bigcup\limits_{t\in T}\alpha^{k(q^m+1)}C_{i+t}^{(\frac{q^m-1}{q-1}, q^{2m})}\right)\right]\cup \left(\mathbb{F}_{q^{m}}^{\ast}\times \{0\}\right).
\end{array}
\]
Since $q^m+1\equiv 2\pmod{\frac{q^m-1}{q-1}}$ and $\alpha$ is a primitive element of $\mathbb{F}_{q^{2m}}$, we have
\[
\begin{array}{l}
\vspace{0.2cm} \omega^k{\cal D}=\bigcup\limits_{i=0}^{\frac{q^m-q}{q-1}}\left[C_{i+k}^{(\frac{q^m-1}{q-1}, q^{m})}\times \bigcup\limits_{t\in T}C_{i+t+2k}^{(\frac{q^m-1}{q-1}, q^{2m})}\right]\cup \left(\mathbb{F}_{q^{m}}^{\ast}\times \{0\}\right)\\
\vspace{0.2cm}\hspace{0.8cm}=\bigcup\limits_{i=0}^{\frac{q^m-q}{q-1}}\left[C_{i}^{(\frac{q^m-1}{q-1}, q^{m})}\times \bigcup\limits_{t\in T}C_{i+t+k}^{(\frac{q^m-1}{q-1}, q^{2m})}\right]\cup \left(\mathbb{F}_{q^{m}}^{\ast}\times \{0\}\right)\\
\vspace{0.2cm}\hspace{0.8cm}\neq \D.
\end{array}
\]
This shows that $\D$ does not arise from a projective $\left(q^{m+r}-q^m+q^r,3,h_1,h_2\right)$ set in ${\rm PG}(2,q^m)$.

\end{proof}

\begin{corollary}\label{srgexistence}
Let $m\ge 2$ be a positive integer. For any $1\le r<m-1$, and any prime power $q$, there exists a strongly regular Cayley graph with parameters $(q^{3m}, (q^{m+r}-q^m+q^r)(q^m-1), q^m-q^r+(q^{m+r}-q^m+q^r)(q^r-2), (q^{m+r}-q^m+q^r)(q^r-1))$.
\end{corollary}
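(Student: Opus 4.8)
The plan is to obtain the corollary as an immediate consequence of Theorem~\ref{MT} together with the standard dictionary between regular partial difference sets in finite abelian groups and strongly regular Cayley graphs. First I would observe that Theorem~\ref{MT} holds for all $m\ge 2$ and all $1\le r<m$, hence in particular for $1\le r<m-1$, so that $\D\subseteq G:=(\mathbb{F}_{q^m}\times\mathbb{F}_{q^{2m}},+)$ is a $(q^{3m},(q^{m+r}-q^m+q^r)(q^m-1),q^m-q^r+(q^{m+r}-q^m+q^r)(q^r-2),(q^{m+r}-q^m+q^r)(q^r-1))$-PDS. It then remains only to check that $\D$ is \emph{regular}, i.e.\ that $(0,0)\notin\D$ and $-\D=\D$, since a regular $(v,k,\lambda,\mu)$-PDS $D$ in a finite abelian group $G$ yields a strongly regular Cayley graph $\mathrm{Cay}(G,D)$ with parameters $(v,k,\lambda,\mu)$ (see \cite{M1994}); applying this to $\mathrm{Cay}(G,\D)$ then finishes the proof.

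For $(0,0)\notin\D$: the classes $C_0^{(\frac{q^m-1}{q-1},q^m)},\dots,C_{(q^m-q)/(q-1)}^{(\frac{q^m-1}{q-1},q^m)}$ run over all $\frac{q^m-1}{q-1}$ cyclotomic classes and therefore partition $\mathbb{F}_{q^m}^\ast$; consequently the first coordinate of every element of $\D$ lies in $\mathbb{F}_{q^m}^\ast$, so $(0,0)\notin\D$. For $-\D=\D$: I would invoke the $\mathbb{F}_q^\ast$-invariance of $\D$ recorded in the Remark following Theorem~\ref{MT}. Concretely, for $\lambda\in\mathbb{F}_q^\ast$ the diagonal automorphism $(x,y)\mapsto(\lambda x,\lambda y)$ of $G$ fixes $\D$: it fixes $\mathbb{F}_{q^m}^\ast\times\{0\}$; it fixes each class $C_i^{(\frac{q^m-1}{q-1},q^m)}=\omega^i\mathbb{F}_q^\ast$ because $\mathbb{F}_q^\ast$ is precisely the subgroup of index $\frac{q^m-1}{q-1}$ in the cyclic group $\mathbb{F}_{q^m}^\ast$; and it fixes each class $C_{j}^{(\frac{q^m-1}{q-1},q^{2m})}$ because $\mathbb{F}_q^\ast$, being the order-$(q-1)$ subgroup of the cyclic group $\mathbb{F}_{q^{2m}}^\ast$ and $(q-1)\mid(q-1)(q^m+1)$, is contained in the index-$\frac{q^m-1}{q-1}$ subgroup $C_0^{(\frac{q^m-1}{q-1},q^{2m})}$. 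Taking $\lambda=-1$ when $p$ is odd, and noting $-1=1$ when $p=2$, we get $-\D=\D$ in all cases. Hence $\D$ is a regular PDS and $\mathrm{Cay}(G,\D)$ is the desired strongly regular Cayley graph.

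I do not expect any real obstacle here: the corollary merely repackages Theorem~\ref{MT}. The only point needing attention is the verification of regularity, and within that the symmetry $-\D=\D$ is the single place where one genuinely uses that $\D$ is $\mathbb{F}_q^\ast$-invariant (equivalently, that negation fixes $\D$) — exactly the property established in the Remark following Theorem~\ref{MT}.
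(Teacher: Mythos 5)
Your proposal is correct and follows essentially the same route as the paper, which states the corollary as an immediate consequence of Theorem~\ref{MT} together with the standard equivalence between regular PDS in abelian groups and strongly regular Cayley graphs. Your explicit verification that $(0,0)\notin\D$ and that $-\D=\D$ (via the diagonal $\mathbb{F}_q^{\ast}$-invariance of $\D$) is a worthwhile detail that the paper leaves implicit.
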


\section{Conclusions}
In this paper, we have constructed PDS in elementary abelian groups of order $q^{3m}$, for all $m\ge 2$ and $1\le r<m$, where $q$ is a prime power. It is natural to ask whether one can construct PDS with Denniston parameters in non-elementary abelian $p$-groups, or in nonabelian $p$-groups. Some work has been done in this direction. For example, Davis and Xiang \cite{DX2000} used Galois rings to construct PDS with the Denniston parameters in $\mathbb{Z}_{4}^{m}\times \mathbb{Z}_{2}^{m}$ for all $m\geq 2$ and $r\in \{1,m-1\}$; Brady \cite{B2022} did a computer search to find examples in the $m=2, r=1$ case for $51$ groups of order $64$. In view of the result in \cite{DX2000}, it is natural to ask whether there exists PDS with Denniston parameters in non-elementary abelian $p$-groups when $p$ is an odd prime.

\vspace{0.1in}

\noindent{\bf Acknowledgement.} The research work of Qing Xiang is partially supported by the National Natural Science Foundation of China Grant No. 12071206, 12131011, 12150710510.

\end{document}